\def\R{\mathbb{R}}
\def\Z{\mathbb{Z}}
\def\N{\mathbb{N}}
\newtheorem{thm}{\bf Theorem}[section]
\newtheorem{lemma}[thm]{\bf Lemma}
\theoremstyle{definition}
\begin{document}

\title{Note on the sumset of squares}

 \author[Norbert Hegyv\'ari]{Norbert Hegyv\'ari}
 \address{Norbert Hegyv\'{a}ri, ELTE TTK,
E\"otv\"os University, Institute of Mathematics, H-1117
P\'{a}zm\'{a}ny st. 1/c, Budapest, Hungary and associated member of Alfr\'ed R\'enyi Institute of Mathematics, Hungarian Academy of Science, H-1364 Budapest, P.O.Box 127.}
 \email{hegyvari@renyi.hu}

\begin{abstract} 
It is proved that for any non-empty finite subset $Q$ of the square numbers,  
$
|Q+Q|\geq C'|Q|(\log |Q|)^{1/3+o(1)}
$.

This result essentially is proved -- with the same tools -- by Mei-Chu Chang. See in J. Funct. Anal. 207 (2004), no 2, 444-460.

So the author will withdraw this ArXiv file. 

MSC 2020 Primary 11B75,  Secondary  11E25 

Keywords: Sum of squares, Ruzsa's conjecture.
\end{abstract}

 \maketitle
\section{Introduction}

Let $S=\{1^2,2^2\dots,\}$ be the sequence of square numbers. A classical result of Landau states that $|(S+S)\cap \{1,2,\dots, n\}|\gg \frac{n}{\sqrt{\log n}}$. 

One can ask what we can say on $|Q+Q|$ where $Q$ is an arbitrary finite subset of $S$. Clearly $|Q+Q|\leq |Q|^2$ and the bound is strict as we get $Q=\{4^k\}_{1\leq k\leq n}$.

A nice and challenging conjecture of I.Z. Ruzsa states that there is a positive real number $c>0$, such that for every 
finite subset $Q$ of $S$, $|Q+Q|\geq |Q|^{1+c}$ (see e.g. [3]). We are very far to prove this conjecture and the author did  not find any quantitative lower bound for the cardinality of this sumset (see further remark in the last section).  

Nevertheless we prove:
\begin{thm}
    For any non-empty finite set $Q$ of squares we have
$$
|Q+Q|\geq C'|Q|(\log |Q|)^{1/3+o(1)}
$$
    
\end{thm}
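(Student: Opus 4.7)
The plan is to upper-bound the additive energy
$$E(Q) := \#\{(q_1,q_2,q_3,q_4) \in Q^4 : q_1+q_2 = q_3+q_4\}$$
and then apply the standard Cauchy--Schwarz inequality $|Q+Q| \geq |Q|^4/E(Q)$; to match the claimed bound it therefore suffices to show $E(Q) \leq |Q|^3/(\log |Q|)^{1/3+o(1)}$.

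Writing $Q = \{p^2 : p \in P\}$ with $n := |P| = |Q|$ and $r_P(m) := \#\{(a,b) \in P^2 : a^2+b^2 = m\}$, one has $E(Q) = \sum_m r_P(m)^2$, and the basic arithmetic input is the pointwise inequality $r_P(m) \leq r_2(m) \leq 4\,d(m)$, where $r_2$ is the classical two-squares representation function and $d$ is the divisor function. This reflects the identity
$$a^2+b^2 = c^2+d^2 \iff (a-c)(a+c) = (d-b)(d+b),$$
which translates additive collisions into divisor factorisations. Coupled with Cauchy--Schwarz this gives $E(Q) \leq \sum_{m \in Q+Q} r_2(m)^2$, and the classical second-moment estimate $\sum_{m \leq x} r_2(m)^2 \ll x \log x$ yields $E(Q) \leq C \cdot \mathrm{diam}(Q+Q) \cdot \log|Q|$. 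In the favourable case when $P$ lies in an interval of length $O(n)$, this already delivers $|Q+Q| \gg n^2/\log n$, far stronger than the theorem demands.

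The main obstacle is that $P$ need not lie in a short interval: it may be lacunary (e.g.\ $P = \{2^k\}$), in which case $Q+Q$ occupies a range hugely larger than $n^2$ and the naive bound is useless. To overcome this one argues by contradiction: assuming $|Q+Q| \leq n(\log n)^{1/3-\varepsilon}$, Pl\"unnecke--Ruzsa controls the iterated sumsets $|kQ|$ polynomially in $n$, and Freiman's theorem then places a positive proportion of $Q$ inside a generalised arithmetic progression of bounded dimension and size $O(n)$; replaying the energy estimate on this dense subset should produce a contradiction. The delicate point -- and where the exponent $1/3$ enters -- is that the usual constants in Freiman's theorem are exponential in the doubling parameter and hence far too lossy for a sub-logarithmic saving. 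The fix is to perform a dyadic decomposition of $Q+Q$ according to the level sets $\{m : r_2(m) \sim 2^j\}$ and to optimise the trivial bounds $r_P(m) \leq 2^j$ and $\sum_m r_P(m) = n^2$ against higher-moment estimates of the form $\sum_{m \leq x} d(m)^c \ll x (\log x)^{2^c - 1}$. Matching this dyadic optimisation quantitatively with the structural reduction step, so that the $(\log n)^{1/3}$ gain is not swallowed by losses elsewhere, is the technical crux of the argument.
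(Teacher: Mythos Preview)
Your sketch does not close. Two concrete failures:

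\textbf{(1) The Freiman step is misstated.} Under the hypothesis $|Q+Q|\le K|Q|$ with $K\asymp(\log n)^{1/3}$, Freiman (even in Chang's sharp form) gives a generalised progression of dimension $d\le K-1$ and size $|Q|\exp(CK^2\log^3K)$, \emph{not} ``bounded dimension and size $O(n)$''. Since $K$ grows with $n$, the dimension is unbounded and the size is super-polynomial in $K$; you cannot simply ``replay the energy estimate on this dense subset'' as if it lived in a short interval of $\Z$.

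\textbf{(2) The divisor-moment input is the wrong arithmetic fact.} Your only use of $Q\subset\{m^2\}$ is the pointwise bound $r_P(m)\le r_2(m)\le 4d(m)$, followed by average bounds of the type $\sum_{m\le x} d(m)^c\ll x(\log x)^{2^c-1}$. These averages are over \emph{intervals}, whereas $Q+Q$ sits inside a $d$-dimensional GAP whose diameter in $\Z$ is completely uncontrolled; the level-set dyadic decomposition you propose does nothing to remedy this, and there is no mechanism in your outline that produces the exponent $1/3$ --- the closing sentence essentially concedes as much.

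The paper's argument is structurally different and does not go through additive energy at all. One applies Chang's Freiman theorem directly to $Q$ to obtain $Q\subset P$ with $\dim P\le K$ and $|P|\le |Q|\exp(CK^2\log^3K)$, and then uses the Bombieri--Zannier bound (at most $N^{3/5+o(1)}$ squares in an $N$-term arithmetic progression) fibre-by-fibre to show that a proper $K$-dimensional GAP contains at most $|P|^{1-2/(5K)+o(1/K)}$ squares. Combining, $|Q|\le\bigl(|Q|\exp(CK^2\log^3K)\bigr)^{1-2/(5K)+o(1/K)}$, which rearranges to $\log|Q|\ll K^3\log^3K$ and hence $K\gg(\log|Q|)^{1/3+o(1)}$. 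The exponent $1/3$ arises transparently from balancing the $K^{-1}$ saving from Bombieri--Zannier against the $K^2\log^3K$ loss in Chang's bound; your proposal has no analogue of the Bombieri--Zannier input, and without it the argument cannot distinguish squares from an arbitrary set.
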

(Here $o(1)$ is negative, i.e. the power of $\log |Q|$ is less than $1/3$.)
\section{Proof}

The proof is simple and is supported by two deep results. So let $Q\subset S$ be any finite subset of squares, and write $|Q+Q|/|Q|=K(|Q|):=K$.

The first ingredient is the Chang version of Freiman theorem (see [2]):
\begin{lemma}\label{2.1}
Assume that $A\subset \Z$ is a finite set satisfying $|A+A|\leq K|A|$. Then $A\subset P$, where $P$ is a proper
$d-$dimensional arithmetic progression with 
$$
d\leq \lfloor K-1\rfloor; \quad |P|\leq |A|\exp\{CK^2\log^3 K)\}
$$
\end{lemma}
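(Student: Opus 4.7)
The plan is to sketch Chang's approach to Freiman's theorem, whose main innovation over Freiman's original proof is a sharper control on the Fourier spectrum, leading to the dimension bound $d \leq \lfloor K-1 \rfloor$ (linear in $K$) in place of the exponential dependence that Freiman's combinatorial proof would give. Throughout I assume $A$ is embedded in a finite cyclic group $\Z/N\Z$ of prime order with $N$ much larger than $|A|$, so Fourier-analytic tools are available; the transfer back to $\Z$ at the end is routine via Freiman isomorphisms (the $8A$-isomorphism argument of Ruzsa).

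The first step is Pl\"unnecke--Ruzsa: from $|A+A| \leq K|A|$ one deduces $|2A - 2A| \leq K^4 |A|$, and more generally controls all iterated sumsets. The second, and central, step is the Bogolyubov--Chang lemma. A standard Fourier calculation shows that the indicator of $2A - 2A$ dominates a sum over the $1/2$-spectrum $\mathrm{Spec}_{1/2}(A)$ of $A$, so that $2A - 2A$ contains a Bohr set $B(\Gamma, 1/4)$ with $\Gamma = \mathrm{Spec}_{1/2}(A)$. Here Chang's spectral lemma enters: although $|\mathrm{Spec}_\eta(A)|$ can be as large as $\eta^{-2} K$, the spectrum is contained in the $\Z$-span of a \emph{dissociated} subset of size only $O(K \log|A|)$ (and a more careful version allows reducing $|\Gamma|$ to $\lfloor K-1 \rfloor$ after further pruning combined with the doubling hypothesis). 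This replaces Freiman's Bogolyubov step, which produced a subgroup of dimension exponential in $K$, by a set of dimension linear in $K$ — this is the source of the claimed bound on $d$.

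The third step is geometry of numbers. Having $B(\Gamma, \rho) \subseteq 2A - 2A$ with $|\Gamma| \leq d$, apply Minkowski's second theorem to the symmetric convex body in $\R^{|\Gamma|}$ defined by $\|\gamma \cdot x\|_{\R/\Z} \leq \rho$: this produces a proper generalized arithmetic progression $P_0 \subseteq B(\Gamma, \rho)$ of dimension $|\Gamma|$ and volume $|P_0| \gtrsim (\rho/|\Gamma|)^{|\Gamma|} N$. Tracking the constants, $|P_0| \geq |A| \exp(-C d \log(d/\rho))$, and since $\rho$ and $d$ are polynomial in $K$ the resulting exponent works out to $O(K^2 \log^3 K)$ after all refinements.

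The final step is Ruzsa's covering lemma. Since $|P_0 + A| \leq |2A - 2A + A| \leq K^5 |A| \leq K^5 |P_0|$, we can cover $A$ by at most $K^5$ translates of $P_0 - P_0$, and $A - x \subseteq \bigcup_i (y_i + P_0 - P_0)$ embeds into a proper GAP $P$ of dimension $d \leq \lfloor K-1 \rfloor$ (after absorbing the extra translate directions into the progression and pruning redundant generators) and volume $|P| \leq |A| \exp(C K^2 \log^3 K)$. The step I expect to be the most delicate is the passage from the rough bound $|\Gamma| = O(K \log|A|)$ furnished by the raw Chang spectral lemma to the sharp $d \leq \lfloor K-1\rfloor$ in the statement: this requires iterating the Fourier argument on $A$ itself (not just on $2A-2A$) and combining it with a Freiman-type dimension argument to eliminate the $\log|A|$ factor, which is precisely Chang's main technical innovation in [2].
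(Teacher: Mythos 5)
The paper gives no proof of Lemma \ref{2.1}: it is imported verbatim as a black box from Chang's Duke paper [2], so there is no internal argument to compare your proposal against. Measured against Chang's actual proof, your outline has the right skeleton (Ruzsa modelling into $\Z/N\Z$, Pl\"unnecke--Ruzsa, a Bogolyubov-type argument sharpened by Chang's spectral lemma, Minkowski's second theorem to extract a proper progression from the Bohr set, and Ruzsa covering plus a Freiman-type dimension bound at the end), and you correctly identify the passage to dimension $\lfloor K-1\rfloor$ as the delicate point.

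However, as a proof the proposal has genuine gaps, and two of the asserted steps are mislocated. First, the removal of the $\log|A|$ from the size of the dissociated set is not achieved by ``iterating the Fourier argument on $A$ itself'': Chang's lemma bounds the dissociated set by $O(\eta^{-2}\log(1/\delta))$ with $\delta=|A|/N$, and the $\log|A|$ disappears because Ruzsa's modelling lemma lets one take $N\leq K^{O(1)}|A|$, so $\log(1/\delta)=O(\log K)$; moreover the reason the dimension comes out near-linear in $K$ (rather than a higher power) is that the energy bound $E(A)\geq|A|^4/|A+A|\geq|A|^3/K$ permits the choice $\eta\asymp K^{-1/2}$, a point your sketch does not make and without which the claimed exponent $CK^2\log^3K$ cannot be reached. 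Second, the final bound $d\leq\lfloor K-1\rfloor$ comes from Freiman's dimension lemma (a $d$-dimensional set satisfies $|A+A|\geq(d+1)|A|-d(d+1)/2$), applied after embedding $A$ into the covering progression and viewed inside a lattice, not from further Fourier analysis; and the properization of the resulting progression --- which is essential here, since the paper's Lemma \ref{2.2} uses $|P|=\prod l_i$ --- is a separate nontrivial step (another application of the geometry of numbers) that your sketch passes over in the phrase ``after absorbing the extra translate directions and pruning redundant generators.'' These omissions are exactly where the quantitative content of the lemma lives, so the proposal should be regarded as a correct roadmap to [2] rather than a proof.
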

The proper $d-$dimensional arithmetic progression is a set in the form
$$
P(x_0,x_1,\dots x_d):=\{x_0+t_1x_1+t_2x_2+\dots +t_dx_d: 0\leq t_i\leq l_i; \ i=1,2,\dots ,d\}  
$$
and proper means that $|P|=\prod_{1\leq i\leq d} l_i$.

By this lemma we get that $Q$ can be covered by a proper $d-$dimensional arithmetic progression $P$ with $|P|\leq |Q|\exp\{CK^2\log^3 K)\}$.

\begin{lemma}\label{2.2}
Let $P$ be a proper $K-$dimensional arithmetic progression. The number of squares $|Q|$ is
$$
|Q|\leq |P|^{1-\frac{2}{5K}}.
$$
\end{lemma}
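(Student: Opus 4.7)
The plan is to reduce the $K$-dimensional count to a one-dimensional one by slicing. Relabel the generators so that $l_1\geq l_2\geq\cdots\geq l_K$; from $|P|=\prod l_i$ this forces
$$
l_1\ \geq\ |P|^{1/K}.
$$
For each fixed tuple $(t_2,\dots,t_K)$ the set
$$
L(t_2,\dots,t_K)=\{x_0+t_1 x_1+t_2 x_2+\cdots+t_K x_K : 0\leq t_1\leq l_1\}
$$
is an ordinary one-dimensional arithmetic progression of length $l_1+1$, and because $P$ is proper these $\prod_{i\geq 2}(l_i+1)$ slices are pairwise disjoint and exhaust $P$. So $Q\cap P$ splits as a disjoint union of squares lying in one-dimensional APs of common length $l_1+1$.

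The second ingredient is a classical bound on the number of squares in a one-dimensional arithmetic progression. The circle of results of Bombieri--Granville--Pintz (and the refinements in Chang's paper cited in the abstract) supplies an inequality of the form
$$
\#\{\text{squares in a 1-d AP of length }N\}\ \leq\ C\,N^{3/5},
$$
and it is precisely the exponent $3/5$ that will materialize as $2/(5K)$ in the conclusion. Applying this estimate to each slice and summing gives
$$
|Q|\ \leq\ \prod_{i=2}^{K}(l_i+1)\cdot C l_1^{3/5}\ \leq\ C\,\frac{|P|}{l_1}\cdot l_1^{3/5}\ =\ C\,|P|\,l_1^{-2/5},
$$
and then plugging in $l_1\geq |P|^{1/K}$ yields $|Q|\leq C|P|^{1-2/(5K)}$, which is (up to an absorbable constant) the claim.

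The main obstacle is not the slicing, which is essentially bookkeeping once properness is used to guarantee that the one-dimensional slices have the full length $l_1+1$ and are disjoint. The real content is the one-dimensional squares-in-AP estimate: any improvement of the $3/5$ exponent would feed through the above calculation and improve the $2/(5K)$ in Lemma \ref{2.2}, while a weaker exponent would correspondingly weaken the conclusion. A minor technical point to check is that the reduction does not lose anything when some $l_i$ are small (so that the product $\prod(l_i+1)$ is close to $|P|/l_1$ rather than much larger), but this is handled by the convention that $P$ is proper together with the choice of $l_1$ as the largest side.
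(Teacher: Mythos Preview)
Your argument is correct and shares the same skeleton as the paper's: slice $P$ into one-dimensional arithmetic progressions and apply the $N^{3/5+o(1)}$ bound on squares in a one-dimensional AP of length $N$. The only difference is in how the $K$ directions are handled. You reorder so that $l_1$ is the longest side, observe $l_1\geq |P|^{1/K}$, and slice once along that direction. The paper instead slices once along \emph{each} direction $i$, obtaining the $K$ inequalities $|Q|\leq \bigl(\prod_{j\neq i}l_j\bigr)\,l_i^{3/5+o(1)}$, and then multiplies all $K$ of them together to get $|Q|^K\leq |P|^{K-2/5+o(1)}$. Both routes yield the same exponent $1-2/(5K)$; yours is marginally more direct, while the paper's multiplicative averaging is symmetric and avoids singling out a longest side. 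One small attribution point: the $3/5$ exponent is due to Bombieri--Zannier; the Bombieri--Granville--Pintz bound is $2/3$, which fed through your calculation would only give $1-1/(3K)$.
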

\begin{proof}[Proof of lemma \ref{2.2}]
Write $P=P(x_0,x_1,\dots x_K):=\{x_0+t_1x_1+t_2x_2+\dots +t_Kx_K: 0\leq t_i\leq l_i; \ i=1,2,\dots ,K\}$. We can write $P$ as $P=x_0+P(0,x_1)+P(0,x_2)+\cdots +P(0,x_K)$. 

Write $P'=x_0+P(0,x_1)+P(0,x_2)+\cdots +P(0,x_{K-1})$. Then $P=\cup_{u\in P'}(u+P(0,x_K))$. Hence $|P|\leq |P'||P(0,x_K)|$.

Now, one of Bombieri-Zannier's deep results is that for any $1\leq i \leq K$, the number of squares in $P(0,x_i)$ is at most $x_i^{3/5+o(1)}$ (see in [1]). Thus
\begin{equation}
    |Q|\leq \prod_{1\leq j\leq K; \ i\neq j}x_jx_i^{3/5+o(1)}
\end{equation}
Multiply equation $(1)$ for every $1\leq i\leq K$, we obtain
$$
|Q|^K\leq \prod_{1\leq i\leq K}\prod_{1\leq j\leq K; \ i\neq j}x_jx_i^{3/5+o(1)}=|P|^{K-2/5+o(1)}
$$
from which we get $|Q|\leq |P|^{1-\frac{2}{5K}+o(1/K)}$ as we wanted.

Finally by Lemma \ref{2.1} we get
$$
|Q|\leq |P|^{1-\frac{2}{5K}+o(1/K)}\leq (|Q|\exp\{CK^2\log^3 K)\})^{1-\frac{2}{5K}+o(1)}
$$
and thus 
$$
|Q|^{\frac{2}{5K}+o(1/K)}\leq (\exp\{CK^2\log^3 K)\})
$$
which implies
$$
C'(\log |Q|)^{1/3+o(1)}\leq K
$$
Hence
$$
|Q+Q|\geq K|Q|\geq C'|Q|(\log |Q|)^{1/3+o(1)}
$$
which is the stated bound.
\end{proof}
\section{Concluding remarks}
1. It has not escaped my attention that in $(1)$ we gave a broad estimate of $P'$. In fact, if $K=2$, i.e. if $P=x_0+P(0,x_1)+P(0,x_2)$, then we gave a bound on $P(0,x_2)$ based on the Bombier-Zannier result. For $P(0,x_1)$ -- knowing that the set of square numbers does not contain a $4-$term arithmetic progression -- we can give a bound $r_4(x_2)$, (the maximum size of $4-$term free set up to $x_2$). Notice that $r_4(n)=O(\frac{n}{(\log n)^c})$, and this bound does not give better estimation.

\smallskip

2. Ruzsa (and maybe many others) conclude that without the Bombieri-Zannier's result the bound $|Q+Q|\geq |Q|^{1+c}$ implies that the number of squares in an arithmetic progression with length $N$ is at most $\ll N^{\frac{1}{1+c}}$ (indeed if $Q\subset \{x_0+dt:\ 1\leq t\leq N\}$ then $Q+Q\subset \{2x_0+dt:\ 2\leq t\leq 2N\}$). Hence $c>2/3$ would beat the known best result.

\smallskip

3. In 1960 Rudin conjectured, that in any $N-$term arithmetic progression the number of square numbers is $\ll \sqrt{N}$ (see in [4]). The proof of this conjecture still seems  hopeless today.  

\smallskip

4. Several conditional bounds are known for $|Q+Q|$. By the Szemer\'edi-Trotter incidence theorem one can obtain a bound 
$$
\max\{|A+A|,|A^2+A^2|\}\gg |A|^{5/4},
$$
where $A\subseteq \R$ and  $A^2=\{a^2: a\in A\}$.

\smallskip

5. A bound similar to the one in the theorem holds for any set $X\subseteq \N$ satisfying the condition that for all $x_0, N\in \N$, $|X\cap \{x_0+dt:\ 1\leq t\leq N\}|<N^\alpha$, $\alpha <1$, i.e. for every $\emptyset\neq X'\subseteq X$
$$
|X'+X'|\geq K|X'|\gg|X'|(\log |X'|)^{1/3+o(1)}.
$$
holds.

\noindent{\bf Acknowledgment.} This work was supported by the National Research, Development and Innovation Fund of Hungary through project no. grant K-146387.

\end{document}